\newtheorem{theorem}{Theorem}[section]
\newtheorem{lemma}[theorem]{Lemma}
\newtheorem{definition}{Definition}[section]
\newcommand{\be}{\begin{equation}}
\newcommand{\ee}{\end{equation}}
\newcommand{\ben}{\begin{equation*}}
\newcommand{\een}{\end{equation*}}
\newcommand{\bea}{\begin{eqnarray}}
\newcommand{\eea}{\end{eqnarray}}
\newcommand{\bean}{\begin{eqnarray*}}
\newcommand{\eean}{\end{eqnarray*}}
\title[From Dominated Splitting to Hyperbolicity]{Characterizations of Dominated Splitting System and its  Relation to Hyperbolicity}
\author{Chun Fang}
\author{Mats Gyllenberg}
\author{Shitao Liu}
\address{Depatment of Mathematics and Statistics, University of Helsinki, P.O. Box 68 FI-00014}
\email{chun.fang@helsinki.fi}
\email{mats.gyllenberg@helsinki.fi}
\email{shitao.liu@helsinki.fi}
\date{\today}
\begin{document}
\maketitle

\begin{abstract}
Hyperbolicity and dominated splitting are two of the most important  concepts in the global analysis of differentiable dynamics. In this paper we give several equivalent characterizations of the dominated splitting and in particular we show a criterion for  dynamical systems being dominated splitting in terms of hyperbolicity.
\end{abstract}
\section{Introduction}
\setcounter{equation}{0}

In this paper we study dominated splitting and its relation to hyperbolicity. It is well-known that hyperbolicity and dominated splitting are two of the most important concepts during the global analysis of differentialble dynamics. Hyperbolicity, as the cornerstone of uniform and robust chaotic dynamics, has been fairly well studied both from topological and the statistical points of view during the past several decades. In particular, the Spectral Decomposition Theorem built up in \cite{N,S2} for hyperbolic systems completely describes the dynamics of these systems. Moreover, hyperbolicity also showed to be the key ingredient in characterization of structural stability, together with a transversality condition. Here, a hyperbolic dynamical system means a system such that its limit set (the minimum closed invariant set that contains the $\omega$- and $\alpha$-limit set of any orbit) is a hyperbolic set (see definition below). Unfortunately, it is soon realized by Smale \cite{S} and others that hyperbolicity is not typical in the sense that it is not dense in the space of all $C^r$ differentiable diffeomorphisms on the manifold. In order to improve this, many efforts have been done and devoted to expand this notion to involve a larger class of dynamics. One important relaxation to hyperbolicity is dominated splitting into two invariant complementary subbundles, which was introduced independently by Ma\~{n}\'{e} \cite{M}, Liao \cite{L} and Pliss \cite{VP} in the context of the stability conjecture, by letting one of them contracted or expanded exponentially faster than the other under the iterations. It is not difficult to see that normally hyperbolic closed invariant curves with dynamics conjugated to irrational rotations admits a dominated splitting but it is not hyperbolic. See also \cite{FGW,SY} for more examples from application fields that satisfy domination but not hyperbolic. We also refer to \cite{BG} and the review paper \cite{Pujals} for some characterizations of dominated splitting and interesting materials from hyperbolcity to dominated splitting. Meanwhile, one must recognize that the theory of dominated splitting is far from complete and successful than that of hyperbolicity. Surprisingly, for $C^2$ diffeomorphisms on the compact surface and for three-dimensional flows the remarkable works of Pujals and Sambarino \cite{PS,PS1} and work of Arroyo and Hertz \cite{AH} give a satisfactory description of the dynamics of any compact invariant set having dominated splitting. In particular, they show a similar Spectral Decomposition Theorem for the limit set of a dynamics with the assumption of dominated splitting.

With these results in mind, a natural question one may ask is what kind of relationship between these two types of dynamical systems (besides the natural implication of dominated splitting by hyperbolicity). Motivated by \cite{Co,P}, in the present paper we will give a series of equivalent characterizations for dominated splitting which ultimately lead to a criterion for dynamical systems being dominated splitting over some invariant set (see definition below) in terms of hyperbolicity. That is, we will show that the difference between dominated splitting and hyperbolicity is only a functional torsion.

We start with first recalling some basic definitions and notations.

Let $M$ be a closed $d$-dimensional manifold and $f:M\rightarrow M$ be a diffeomorphism. Let $\mathcal{E}$ be a continuous $d$-dimensional vector bundle over $M$ with a continuous inner product $\langle\cdot,\cdot\rangle$ and let $\|\cdot\|$ be the induced norm. A \emph{linear cocycle} over a dynamical system $(M,f)$ is an automorphism $F$ of a vector bundle $\mathcal{E}$ over $M$ that projects to $f$. In particular we consider in this paper the trivial vector bundle $M\times\mathbb{R}^d$, then any linear cocycle $F: M\times\mathbb{R}^d\rightarrow M\times\mathbb{R}^d$ can be identified with a matrix-valued function $A:M\rightarrow GL(d,\mathbb{R})$ var $F(x,v)=(f(x),A(x)v)$. For simplicity, we denote this linear cocycle with a pair $(f,A)$.

A \emph{projector} on $M\times\mathbb{R}^n$ is defined to be a continuous mapping $P:M\times\mathbb{R}^n\rightarrow M\times\mathbb{R}^n$ such that $P$ maps each fiber to itself and $P$ is a projection on each fiber. In other words, one has
\[P(x,u)=(x,P(x)u),\]
where $P(x)$ is a projection.

\begin{definition}\label{hyperbolicity}
An $f$-invariant  compact set $\Lambda\subset M$ is said to be \emph{hyperbolic} if the tangent bundle $T_{\Lambda}M$ over $\Lambda$ admits a continuous decomposition
$$T_{\Lambda}M=E^s\oplus E^u,$$
invariant under the derivative $Df$ and there exist positive constants $C>1$ and $\alpha>0$ such that
\begin{eqnarray*}
&&\|Df^{n}(x)Q(x)\|\leq Ce^{- n\alpha},\\[3mm]
&&\|(I-Q(x))Df^{-n}(f^{n}(x))\|\leq Ce^{-n\alpha},
\end{eqnarray*}
for all $n\geq 1$, and $x\in\Lambda$. Here $Q$ and $I-Q$ are the oblique projectors corresponding to decomposition $T_{\Lambda}M=E^s\oplus E^u$.
\end{definition}

\begin{definition}\label{dominatesplitting}
Let $\Lambda\subset M$ be a compact $f$-invariant set, $k\geq 2$. An $\it{Df}$-invariant splitting $T_xM=E_1(x)\oplus\cdots\oplus E_k(x)$, $x\in\Lambda$ of the tangent bundle over $\Lambda$ is called \emph{dominated} if there exist positive constants $C>1$ and $\alpha>0$ such that for every $i<j$, every $x\in\Lambda$ and every pair of vectors $u, v\in T_xM$, we have $\forall m,n\in\mathbb{Z}$, $m\geq 0$,
\begin{equation}\label{s1eq4}
\frac{\|Df^{n+m}(x)P_i(x)u\|\|Df^n(x)P_j(x)v\|}{\|Df^n(x)P_i(x)u\|\|Df^{n+m}(x)P_j(x)v\|}
\leq Ce^{-m\alpha}
\end{equation}
for all $m,n\in\mathbb{Z}$ with $m\geq 0$, provided the denominator of \eqref{s1eq4} do not vanish, where $P_1,\cdots,P_k$ are the oblique projectors with respect to decomposition $T_{\Lambda}M=E_1\oplus\cdots\oplus E_k$.
\end{definition}

For convenience, we write $E_i\prec E_j$ if \eqref{s1eq4} holds.
Let $n_i=\dim(E_i)$ and suppose above splitting exist, we say that the dynamical system $f:M\rightarrow M$ admits a $(n_1,\cdots,n_k)$-dominated splitting over $\Lambda$ with decomposition $T_{\Lambda}M=E_1\oplus\cdots\oplus E_k$.

As we mentioned before, it is easy to see from the definitions that hyperbolicity implies dominated splitting. In the following sections, we will investigate various properties and characterizations of dominated splitting systems and their difference with hyperbolic systems. The rest of the paper is organized as follows. In section 2 we show that dominated splitting systems are reducible systems, and use this result to give the first equivalent characterization of dominated splitting. Then using this result we give in section 3 another equivalent description of dominated splitting in terms of upper and lower functions. Finally, section 4 establishes the third equivalent characterization of dominated splitting via summably separated functions which says that the difference between dominated splitting and hyperbolicity is a function torsion.

\section{Reducible systems}
In this section we introduce and study reducible systems and prove it is a necessary condition for dominated splitting. Moreover, we will show dominated splitting can be characterized by a simple inequality.

\begin{definition}\label{reducible}
Let $T_{\Lambda}M=E_1\oplus\cdots\oplus E_k$, $k\geq 2$ be an $\it{Df}$-invariant splitting of tangent bundle over $\Lambda$ and $P_1,\cdots,P_k$ be the corresponding oblique projectors (not necessarily continuous). Dynamical system $f$ is said to be \emph{reducible} with respect to this decomposition if there exists constant $K>0$ such that
\be\label{s1eq7}
\|Df^n(x)P_i(x)Df^{-n}(f^n(x))\|\leq K, \quad i=1,\cdots,k,
\ee
for all $n\in\mathbb{Z}$ and $x\in\Lambda$.
\end{definition}



The next lemma states that dominated splitting implies reducible.

\begin{lemma}\label{lemma2}
If dynamical system $f:M\rightarrow M$ admits $(n_1,\cdots,n_k)$-dominated splitting over $\Lambda$ with decomposition $T_{\Lambda}M=E_1\oplus\cdots\oplus E_k$, then it is reducible with respect to the same decomposition.
\end{lemma}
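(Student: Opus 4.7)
Plan: First, I would observe that the $Df$-invariance of the splitting implies the identity
\[
Df^n(x)\,P_i(x)\,Df^{-n}(f^n(x)) = P_i(f^n(x)),
\]
since, decomposing $w = \sum_j w_j$ along $E_j(f^n(x))$, the operator pulls each piece back via $Df^{-n}$ into $E_j(x)$, applies $P_i(x)$ to extract the $i$-th component, and pushes forward again, returning precisely $w_i$. Because $\Lambda$ is $f$-invariant, \eqref{s1eq7} is therefore equivalent to $\sup_{y\in\Lambda}\|P_i(y)\| < \infty$ for each $i$, or equivalently, to a uniform positive lower bound on the angle between $E_i(y)$ and $\bigoplus_{j \neq i} E_j(y)$ for $y \in \Lambda$.

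For the two-subbundle case $k=2$, I would establish this uniform bound by contradiction. Suppose $\sup_{y\in\Lambda}\|P_1(y)\|=\infty$; then a routine normalization extracts a sequence $y_n\in\Lambda$ together with unit vectors $a_n\in E_1(y_n)$ and $b_n\in E_2(y_n)$ satisfying $\|a_n-b_n\|\to 0$. For any fixed $m\geq 0$, the bounds $M_m := \sup_{y\in\Lambda}\|Df^m(y)\|$ and $M_{-m} := \sup_{y\in\Lambda}\|Df^{-m}(y)\|$ are finite by compactness, so
\[
\bigl|\|Df^m(y_n)a_n\| - \|Df^m(y_n)b_n\|\bigr| \leq M_m\|a_n - b_n\| \to 0.
\]
Meanwhile the domination inequality (taking $n=0$, $u=a_n$, $v=b_n$) gives $\|Df^m(y_n)a_n\| \leq Ce^{-m\alpha}\|Df^m(y_n)b_n\|$, together with $\|Df^m(y_n)b_n\| \geq 1/M_{-m} > 0$. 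Choosing $m$ large enough that $Ce^{-m\alpha} < 1/2$, the difference $\|Df^m b_n\| - \|Df^m a_n\| \geq 1/(2M_{-m})$ stays uniformly positive, contradicting the display.

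For the general case $k \geq 3$, the same scheme yields uniform boundedness of the coarser projectors $Q_i$ onto $\bigoplus_{j \leq i}E_j$ along $\bigoplus_{j > i}E_j$, and the identity $P_i = Q_i - Q_{i-1}$ (with $Q_0 = 0$, $Q_k = I$) then recovers the bound on each $P_i$, hence reducibility. The subtlety here is that the contradiction, now run on the grouped $2$-splitting, begins with unit vectors $a_n\in\bigoplus_{j \leq i}E_j$ and $b_n\in\bigoplus_{j > i}E_j$ rather than in single subbundles, and the domination inequality is stated only for single-component projections. The main technical obstacle is therefore to show that after selecting a dominant single-subbundle component of each grouped vector (one carrying a definite fraction of the norm) and applying pairwise domination between those components, one still gets the exponential separation of $\|Df^m a_n\|$ and $\|Df^m b_n\|$ required to close the contradiction — exploiting both forward and backward iteration as needed, but without circularly invoking the projector bounds one is trying to establish.
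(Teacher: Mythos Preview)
Your invariance identity $Df^n(x)P_i(x)Df^{-n}(f^n(x))=P_i(f^n(x))$ cleanly reduces reducibility to $\sup_{y\in\Lambda}\|P_i(y)\|<\infty$, a simplification the paper does not exploit. For $k=2$ your contradiction argument is correct and takes a different route from the paper: the paper obtains a direct lower bound on $\bigl\|\tfrac{Df^nQ_1u}{\|Df^nQ_1u\|}+\tfrac{Df^nQ_2v}{\|Df^nQ_2v\|}\bigr\|$ by comparing it with the same quantity at time $n+m$ (bounded above by $\|Df^m\|$ times the time-$n$ quantity, and below via domination), and then converts this uniform angle estimate into a projector bound through the elementary inequality $\|x\|\,\bigl\|\tfrac{x}{\|x\|}+\tfrac{y}{\|y\|}\bigr\|\le 2\|x+y\|$. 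The paper's route yields an explicit constant; yours is shorter but non-constructive.

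For $k\ge 3$ both you and the paper pass to the coarse two-block splitting $(E_1\oplus\cdots\oplus E_i)\oplus(E_{i+1}\oplus\cdots\oplus E_k)$. The paper simply invokes the clustering property from \cite[Appendix~B]{BDV} to obtain domination between the blocks, then runs its two-bundle estimate; you instead flag clustering as the ``main technical obstacle'' and leave it unresolved. Your worry about circularity is legitimate since Definition~\ref{dominatesplitting} does not assume continuity of the subbundles, but clustering can in fact be established without prior projector bounds by induction on $k$: the two-bundle case applied to $E_{k-1}\prec E_k$ gives a uniform angle between these at every point of $\Lambda$; that angle at $f^m(x)$ lets you bound $\|Df^m b\|$ from below for unit $b\in E_{k-1}\oplus E_k$ in terms of whichever component $b_{k-1},b_k$ has norm $\ge 1/2$, whence pairwise domination yields $E_j\prec(E_{k-1}\oplus E_k)$ for $j<k-1$; then iterate downward. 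Your ``dominant component'' idea is exactly this mechanism, so the plan is sound---but you should either carry out the induction or cite clustering as the paper does, rather than leave it as an open obstacle.
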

\begin{proof}
Fix $1\leq i\leq k-1$. Since $E_1\prec\cdots\prec E_k$ and use the clustering property of dominated splitting (see \cite[Appendix B]{BDV}), we have $(E_1\oplus\cdots\oplus E_i)\prec(E_{i+1}\oplus\cdots\oplus E_k)$ with projectors $Q_1=P_1\oplus\cdots\oplus P_i$, $Q_2=P_{i+1}\oplus\cdots\oplus P_{k}$. By definition, we can find constants $C>1$ and $\alpha>0$ such that $\forall m,n\in\mathbb{Z}$, $m\geq 0$,
\begin{equation}\label{eq2}
\frac{\|Df^{n+m}(x)Q_1(x)u\|\|Df^n(x)Q_2(x)v\|}{\|Df^n(x)Q_1(x)u\|\|Df^{n+m}(x)Q_2(x)v\|}
\leq Ce^{-m\alpha},
\end{equation}
where $u,v\in T_xM$ such that the dominators do not vanish. In what follows, fix $0<\beta<1$ such that $\beta<\|Df\|<\frac{1}{\beta}$, where $\|Df\|=\sup_{x\in\Lambda}\|Df(x)\|$.

Consider the term $\|\frac{Df^{n+m}(x)Q_1(x)u}{\|Df^n(x)Q_1(x)u\|}+\frac{Df^{n+m}(x)Q_2(x)v}{\|Df^n(x)Q_2(x)v\|}\|$. On the one hand, by \eqref{eq2} we have
\bea\label{s1eq8}
&&\left\|\frac{Df^{n+m}(x)Q_1(x)u}{\|Df^n(x)Q_1(x)u\|}+\frac{Df^{n+m}(x)Q_2(x)v}
{\|Df^n(x)Q_2(x)v\|}\right\|\nonumber\\[3mm]
& \geq & \frac{\|Df^{n+m}(x)Q_1(x) u\|}{\|Df^n(x)Q_1(x)u\|}\left(\frac{\|Df^{n}(x)Q_1(x)u\|\|Df^{n+m}(x)Q_2(x)v\|}
{\|Df^{n+m}(x)Q_1(x)u\|\|Df^n(x)Q_2(x)v\|}-1\right) \nonumber \\[3mm]
& \geq & \frac{\|Df^{n+m}(x)Q_1(x)u\|}{\|Df^n(x)Q_1(x)u\|}\left(C^{-1}e^{m\alpha}-1\right) \nonumber \\[3mm]
& \geq & \beta^m (C^{-1}e^{m\alpha}-1),
\eea
where we fix $m$ large enough so that $C^{-1}e^{m\alpha}-1 >0$.

On the other hand,
\bea\label{s1eq9}
&&\left\|\frac{Df^{n+m}(x)Q_1(x)u}{\|Df^n(x)Q_1(x)u\|}+\frac{Df^{n+m}(x)Q_2(x)v}
{\|Df^n(x)Q_2(x)v\|}\right\|\nonumber\\[3mm]
& = & \left\|Df^{m}(f^n(x))\left(\frac{Df^{n}(x)Q_1(x)u}{\|Df^n(x)Q_1(x)u\|}+
\frac{Df^{n}(x)Q_2(x)v}{\|Df^n(x)Q_2(x)v\|}\right)\right\| \nonumber \\[3mm]
& \leq & \beta^{-m}\left\|\frac{Df^{n}(x)Q_1(x)u}{\|Df^n(x)Q_1(x)u\|}+
\frac{Df^{n}(x)Q_2(x)v}{\|Df^n(x)Q_2(x)v\|}\right\|
\eea

Combine \eqref{s1eq8} and \eqref{s1eq9} we readily get
\be\label{s1eq10}
\left\|\frac{Df^{n}(x)Q_1(x)u}{\|Df^n(x)Q_1(x)u\|}+\frac{Df^{n}(x)Q_2(x)v}
{\|Df^n(x)Q_2(x)v\|}\right\| \geq \beta^{2m}(C^{-1}e^{m\alpha}-1)
\ee

Using the inequality $\|x\|\|\frac{x}{\|x\|}+\frac{y}{\|y\|}\|\leq 2\|x+y\|$, for any $x, y\in \mathbb{R}^d$, $d\geq 1$ with $x=Df^n(x)Q_1(x)u$ and $y=Df^n(x)Q_2(x)v$, we then have
\begin{multline*}\label{s1eq11}
\|Df^n(x)Q_1(x)u\|, \|Df^n(x)Q_2(x)v\| \\[3mm]
\leq 2\beta^{-2m}(C^{-1}e^{m\alpha}-1)^{-1}\|Df^n(x)Q_1(x)u+Df^n(x)Q_2(x)v\|.
\end{multline*}

It implies, by letting $u=v$, that
\begin{eqnarray*}\label{s1eq12}
&& \|Df^n(x)Q_1(x)u\| \nonumber \\[3mm]
&\leq& 2\beta^{-2m}(C^{-1}e^{m\alpha}-1)^{-1} \|Df^n(x)Q_1(x)u+Df^n(x)Q_2(x)u\|\nonumber\\[3mm]
&=& 2\beta^{-2m}(C^{-1}e^{m\alpha}-1)^{-1} \|Df^n(x)u\|.
\end{eqnarray*}

Replaced $u$ by $Df^{-n}(f^n(x))u$ we get from above that
\be\label{s1eq13}
\|Df^n(x)(P_1(x)\oplus\cdots\oplus P_i(x))Df^{-n}(f^n(x))\|\leq 2\alpha^{-2m}(C^{-1}e^{m\alpha}-1)^{-1}
\ee

Note that \eqref{s1eq13} holds for all $i=1,\cdots,k-1$ and $Df^n(x)(P_1(x)\oplus\cdots\oplus P_k(x))Df^{-n}(x)\equiv I$, therefore \eqref{s1eq7} holds for $i=1,\cdots,k$.
\end{proof}

Now we can prove our first characterization of dominated splitting.

\begin{theorem}\label{mainthm1}
Dynamical system $f:M\rightarrow M$ admits a $(n_1,\cdots,n_k)$-dominated splitting over invariant set $\Lambda$ if and only if there exist supplementary $\it{Df}$-invariant projectors $P_1,\cdots,P_k$ on $T_{\Lambda}M$ with $\dim(P_i)=n_i$ and constants $K>1$ and $\alpha>0$ such that for $i=1\cdots,k-1$,
\begin{multline}\label{eq1}
\|Df^{n+m}(x)P_i(x)Df^{-n}(f^n(x))\| \\
\|Df^n(x)P_{i+1}(x)Df^{-(n+m)}(f^{n+m}(x))\|\leq Ke^{-m\alpha},
\end{multline}
hold for all $m,n\in\mathbb{Z}$, $m\geq 1$ and $x\in\Lambda$.
\end{theorem}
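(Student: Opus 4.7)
The plan is to rewrite both \eqref{s1eq4} and \eqref{eq1} in terms of the restricted norms $\|Df^m(y)|_{E_i(y)}\|$ and $\|(Df^m(y)|_{E_{i+1}(y)})^{-1}\|$ on the invariant sub-bundles; after this reduction, the equivalence becomes essentially algebraic, with Lemma \ref{lemma2} doing the nontrivial dynamical work.

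The first step is to exploit the $Df$-invariance identity $Df^n(x)P_i(x)=P_i(f^n(x))Df^n(x)$ to rewrite the two factors in \eqref{eq1} as
\begin{eqnarray*}
Df^{n+m}(x)P_i(x)Df^{-n}(f^n(x)) &=& Df^m(f^n(x))\,P_i(f^n(x)), \\
Df^n(x)P_{i+1}(x)Df^{-(n+m)}(f^{n+m}(x)) &=& P_{i+1}(f^n(x))\,Df^{-m}(f^{n+m}(x)).
\end{eqnarray*}
Setting $y=f^n(x)$, which ranges over all of $\Lambda$ by $f$-invariance, \eqref{eq1} becomes the uniform estimate
\[
\|Df^m(y)P_i(y)\|\cdot\|P_{i+1}(y)Df^{-m}(f^m(y))\|\leq Ke^{-m\alpha},\quad y\in\Lambda,\ m\geq 1.
\]

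For the forward direction, I would invoke Lemma \ref{lemma2} to obtain a uniform bound $\|P_i(y)\|\leq K_0$ on the projectors. Since $P_i(y)u\in E_i(y)$ for every $u$, this gives $\|Df^m(y)P_i(y)\|\leq K_0\|Df^m(y)|_{E_i(y)}\|$, and an analogous commutation of $Df^{-m}$ past $P_{i+1}$ produces $\|P_{i+1}(y)Df^{-m}(f^m(y))\|\leq K_0\|(Df^m(y)|_{E_{i+1}(y)})^{-1}\|$. Specializing \eqref{s1eq4} to $n=0$ and taking suprema over $u\in E_i(y)$, $v\in E_{i+1}(y)$ yields
\[
\|Df^m(y)|_{E_i(y)}\|\cdot\|(Df^m(y)|_{E_{i+1}(y)})^{-1}\|\leq Ce^{-m\alpha},
\]
and combining these three displayed inequalities gives \eqref{eq1} with new constant $K_0^2 C$.

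For the converse, the reverse estimates $\|Df^m(y)|_{E_i(y)}\|\leq\|Df^m(y)P_i(y)\|$ and $\|(Df^m(y)|_{E_{i+1}(y)})^{-1}\|\leq\|P_{i+1}(y)Df^{-m}(f^m(y))\|$ are immediate from $P_i|_{E_i}=I$, so the transformed \eqref{eq1} directly gives $\|Df^m(y)|_{E_i(y)}\|\cdot\|(Df^m(y)|_{E_{i+1}(y)})^{-1}\|\leq Ke^{-m\alpha}$. Evaluating this at $y=f^n(x)$ and testing on the pushed-forward vectors $Df^n(x)u$, $Df^n(x)v$ with $u\in E_i(x)$, $v\in E_{i+1}(x)$ recovers \eqref{s1eq4} for consecutive indices $j=i+1$. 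For a general pair $j>i$, I would telescope the consecutive estimates at $(\ell,\ell+1)$, $\ell=i,\ldots,j-1$: using the trivial bound $\|Df^m|_{E_\ell}\|\cdot\|(Df^m|_{E_\ell})^{-1}\|\geq 1$ to discard the intermediate factors yields $\|Df^m|_{E_i}\|\cdot\|(Df^m|_{E_j})^{-1}\|\leq K^{j-i}e^{-(j-i)m\alpha}\leq K^{k-1}e^{-m\alpha}$, which gives full \eqref{s1eq4}. The only real obstacle is the bookkeeping of keeping straight whether an operator lives on $T_yM$ or on a sub-bundle and carefully propagating the invariance relations through compositions of $Df^m$, $Df^{-m}$ and the projectors; once the common restricted-norm formulation is isolated, both implications collapse to short computations.
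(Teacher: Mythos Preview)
Your proof is correct and follows essentially the same route as the paper: both directions rely on the $Df$-invariance of the projectors together with Lemma~\ref{lemma2} (for the projector bound in the forward direction) and a telescoping argument to pass from consecutive pairs $(i,i+1)$ to arbitrary $(i,j)$. The only organizational difference is that you first reduce \eqref{eq1} to the case $n=0$ via the substitution $y=f^n(x)$ and then pass through the restricted-norm quantities $\|Df^m|_{E_i}\|$ and $\|(Df^m|_{E_{i+1}})^{-1}\|$, whereas the paper works directly with the full expressions and performs the substitutions $u\mapsto Df^{-n}(f^n(x))u$, $v\mapsto Df^{-(n+m)}(f^{n+m}(x))v$ (and their inverses) without isolating this intermediate formulation; the underlying computations are the same.
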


\begin{proof}
Suppose the splitting $T_{x}M=E_1(x)\oplus\cdots\oplus E_k(x)$, $x\in\Lambda$ is $(n_1,\cdots,n_k)$-dominated and let $P_1(x),\cdots,P_k(x)$, $x\in\Lambda$ be corresponding projectors. We show that these projectors satisfy condition \eqref{eq1} and hence complete the proof of necessity.

To do this, let us fix $1\leq i\leq k-1$, $x\in\Lambda$ and choose any $u,v\in T_xM$ with $P_i(x)u\neq 0$, $P_{i+1}(x)v\neq 0$. By \eqref{s1eq4}, there exist constants $C> 1$ and $\alpha>0$ such that
\begin{equation*}\label{s1eq14}
\frac{\|Df^{n+m}(x)P_i(x)u\|\|Df^n(x)P_{i+1}(x)v\|}{\|Df^n(x)(x)P_i(x)u
\|\|Df^{n+m}(x)P_{i+1}v\|}\leq Ce^{-m\alpha}
\end{equation*}

After replacing $u$ with $Df^{-n}(f^n(x))u$ and $v$ with $Df^{-(n+m)}(f^{n+m}(x))v$, we get
\begin{multline*}
\|Df^{n+m}(x)P_i(x)Df^{-n}(f^n(x))u\|\|Df^n(x)P_{i+1}(x)Df^{-(n+m)}(f^{n+m}(x))v\|\\[3mm]
\leq Ce^{-m\alpha}\|Df^n(x)P_i(x)Df^{-n}(f^n(x))u\| \\ \
|Df^{n+m}(x)P_{i+1}(x)Df^{-(n+m)}(f^{n+m}(x))v\|
\leq CK^2e^{-m\alpha}=\tilde{K}e^{-m\alpha}.
\end{multline*}

Note that here $\tilde{K}=C K^2$ and $\alpha$ are independent of $x\in\Lambda$ and above result holds for all $x\in\Lambda$. This proves our claim.

Conversely, suppose \eqref{eq1} holds. Namely, $\forall u\in T_{f^n(x)}M$, $v\in T_{f^{n+m}(x)}M$, we have
\begin{multline}\label{s1eq16}
\|Df^{n+m}(x)P_i(x)Df^{-n}(f^n(x))u\| \\
\|Df^n(x)P_{i+1}(x)Df^{-(n+m)}(f^{n+m}(x))v\|\leq Ke^{-m\alpha}\|u\|\|v\|
\end{multline}

Without loss of generality, we assume $P_i(x)Df^{-n}(f^n(x))u\neq 0$ and $P_{i+1}(x)Df^{-(n+m)}(f^{n+m}(x))v\neq 0$. By replacing $u$ with $Df^n(x)P_i(x)u$ and $v$ with $Df^{n+m}(x)P_{i+1}(x)v$ in \eqref{s1eq16}, we obtain for $i=1,\cdots,k-1$,
\begin{equation}\label{eq3}
\frac{\|Df^{n+m}(x)P_i(x)u\|\|Df^n(x)P_{i+1}(x)v\|}{\|Df^{n}(x)P_i(x)u\|
\|Df^{n+m}(x)P_{i+1}(x)v\|}\leq Ke^{-m\alpha}.
\end{equation}

Hence \eqref{s1eq4} holds for all pair $(i,j)$, where $1\leq i\leq k-1$ and $j=i+1$. Fix $i$, for $j>i+1$ one can induce through \eqref{eq3} that
\begin{equation*}
\frac{\|Df^{n+m}(x)P_i(x)u\|\|Df^n(x)P_{j}(x)v\|}{\|Df^{n}(x)P_i(x)u\|
\|Df^{n+m}(x)P_{j}(x)v\|}\leq (Ke^{-m\alpha})^{j-i}\leq K^ne^{-m\alpha},
\end{equation*}

Thus we proved that \eqref{s1eq4} holds for all $1\leq i<j\leq k$.
\end{proof}

\begin{proof}[Another proof of the necessarity] Alternatively, here we present another proof of the necessarity.
By making use of invariance property of $P_i$ and $P_i^2=P_i$, we have
\begin{eqnarray*}
Df^{n+m}(x)P_i(x)u &=& Df^{n+m}(x)P_i^2(x)Df^{-n}(f^n(x))Df^n(x)u\\[3mm]
&=& Df^{n+m}(x)P_i(x)Df^{-n}(f^n(x))P_i(f^n(x))Df^n(x)u \\[3mm]
&=& Df^{n+m}(x)P_i(x)Df^{-n}(f^n(x))Df^n(x)P_i(f(x))u.
\end{eqnarray*}

Similarly, we can get
\begin{equation*}
Df^n(x)P_j(x)v=Df^n(x)P_j(x)Df^{-(n+m)}(f^{n+m}(x))Df^{n+m}(x)P_j(x)v.
\end{equation*}

Therefore, by then definition of dominated splitting, we have
\begin{multline*}
Ce^{-m\alpha}\geq
\frac{\|Df^{n+m}(x)P_i(x)u\|\|Df^n(x)P_j(x)v\|}{\|Df^n(x)P_i(x)u\|\|Df^{n+m}(x)P_j(x)v\|} \\[3mm]
=\|Df^{n+m}(x)P_i(x)Df^{-n}(f^n(x))\tilde{u}\|
\|Df^n(x)P_j(x)Df^{-(n+m)}(f^{n+m}(x))\tilde{v}\|
\end{multline*}
where $\tilde{u}=\frac{Df^n(x)P_i(f(x))u}{\|Df^n(x)P_i(f(x))u\|}$ and
$\tilde{v}=\frac{Df^{n+m}(x)P_j(x)v}{\|Df^{n+m}(x)P_j(x)v\|}$. Hence,
\begin{multline*}
\|Df^{n+m}(x)P_i(x)Df^{-n}(f^n(x))\| \\
\|Df^n(x)P_j(x)Df^{-(n+m)}(f^{n+m}(x))\|\leq Ce^{-m\alpha}
\end{multline*}
Let $j=i+1$ in above formula, we get \eqref{eq1}.
\end{proof}
%

\section{Upper and lower functions}

Let $\{p(k)\}_{k\in\mathbb{Z}}$ be a series. For each fixed $N\in \mathbb{Z}^+$, denote
$$p_N(k)=\frac{1}{N}\sum_{j=0}^{N-1}p(k+j).$$

We then have following fact.
\begin{lemma}\label{lemma3}
Let $\{p(k)\}_{k\in\mathbb{Z}}$ be a bounded series with a uniform bound $\|p\|$. Then for $n$, $m\in\mathbb{N}$ with $m\geq 1$, we have
\be\label{s2eq3}
\left\|\sum_{k=n}^{n+m-1}\big(p(k) - p_N(k)\big)\right\|< \|p\|N
\ee
\end{lemma}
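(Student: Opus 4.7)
The plan is to exploit an ``averaging smooths to boundary'' phenomenon: writing out the definition of $p_N(k)$ and swapping the order of summation should reveal that the bulk of $\sum_{k=n}^{n+m-1} p_N(k)$ equals $\sum_{k=n}^{n+m-1} p(k)$, so that the difference is governed only by two short boundary sums, near the endpoints $k=n$ and $k=n+m$, of total length comparable to $N$.

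First I would interchange the two summations to obtain
$$\sum_{k=n}^{n+m-1} p_N(k) \;=\; \frac{1}{N}\sum_{j=0}^{N-1}\sum_{\ell=n+j}^{n+m-1+j} p(\ell),$$
and then, for each fixed $j$, decompose the shifted window $[n+j,\,n+m-1+j]$ as the reference window $[n,\,n+m-1]$, minus a ``head'' piece $[n,\,n+j-1]$ of length $j$, plus a ``tail'' piece $[n+m,\,n+m-1+j]$ also of length $j$. Averaging over $j\in\{0,\dots,N-1\}$, the reference window contributes exactly $\sum_{k=n}^{n+m-1} p(k)$, which cancels the $\sum p(k)$ term on the left of the desired identity. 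What remains is
$$\sum_{k=n}^{n+m-1}\bigl(p(k)-p_N(k)\bigr) \;=\; \frac{1}{N}\sum_{j=0}^{N-1}\sum_{\ell=n}^{n+j-1} p(\ell)\;-\;\frac{1}{N}\sum_{j=0}^{N-1}\sum_{\ell=n+m}^{n+m-1+j} p(\ell).$$

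At this point the uniform bound $\|p\|$ finishes the argument: each inner sum has exactly $j$ terms of norm at most $\|p\|$, so the triangle inequality together with the identity $\sum_{j=0}^{N-1}j=N(N-1)/2$ produces
$$\left\|\sum_{k=n}^{n+m-1}\bigl(p(k)-p_N(k)\bigr)\right\| \;\le\; \frac{2}{N}\cdot\frac{N(N-1)}{2}\,\|p\| \;=\;(N-1)\|p\|\;<\;N\|p\|,$$
which is exactly \eqref{s2eq3}, with strictness coming from the elementary inequality $N-1<N$.

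The argument has no serious analytic obstacle; it is essentially a discrete manifestation of the fact that a telescoping average has only boundary contributions of bounded total mass. The one step that requires care is the combinatorial bookkeeping when splitting the shifted windows into head, body, and tail: I would double-check the index ranges and signs to make sure the reference window cancels \emph{exactly}, rather than leaving behind a residual term of order $m/N$, which would ruin the $N$-independent dependence on $m$ that is essential for the lemma to be useful in later applications.
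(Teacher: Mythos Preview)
Your proposal is correct and follows essentially the same approach as the paper: expand $p_N(k)$, swap the order of summation, observe that for each shift $j$ the window $[n+j,n+m-1+j]$ differs from $[n,n+m-1]$ only by a head and a tail of length $j$, and bound the surviving boundary terms by $\frac{2\|p\|}{N}\sum_{j=1}^{N-1}j=(N-1)\|p\|<N\|p\|$. The only cosmetic difference is that the paper keeps the norm outside throughout and arrives at the same identity $\sum_{k=n}^{n+j-1}p(k)-\sum_{k=n+m}^{n+m+j-1}p(k)$ in one chain of equalities, whereas you first derive the explicit formula for the difference and then take norms; the combinatorics and the final bound are identical.
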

\begin{proof}
By direct computation, we have
\bea\label{s2eq4}
\left\|\sum_{k=n}^{n+m-1}\big(p(k) - p_N(k)\big)\right\| & = &\frac{1}{N}\left\|\sum_{k=n}^{n+m-1}\sum_{j=0}^{H-1}\left(p(k)-p(k+j)\right)\right\| \nonumber \\[3mm]
& = & \frac{1}{N}\left\|\sum_{j=0}^{N-1}\left(\sum_{k=n}^{n+m-1}p(k) - \sum_{k=n+j}^{n+m+j-1} p(k)\right)\right\| \nonumber \\[3mm]
& = & \frac{1}{N}\left\|\sum_{j=0}^{N-1}\left(\sum_{k=n}^{n+j-1}p(k) - \sum_{k=n+m}^{n+m+j-1} p(k)\right)\right\| \nonumber \\[3mm]
& \leq & \frac{2\|p\|}{N}\sum_{j=1}^{N-1}j < \|p\|N
\eea
\end{proof}

Next we introduce the notion of upper and lower functions.
\begin{definition}\label{upperlowerfunction}
Let $f: M\to M$ be a diffeomorphism on a closed manifold $M$ and $\Lambda\subset M$ be any compact $f$-invariant set. Let the splitting $T_xM=E_1(x)\oplus\cdots\oplus E_k(x)$, $x\in\Lambda$ of tangent bundle over $\Lambda$ be $Df$-invariant. A function $g:\Lambda\to\mathbb{R}$ is called an \emph{upper (}or \emph{lower)} \emph{function} of $E_i$ with respect to $f$ over $\Lambda$ if there exists a constant $K_i>1$ such that for $n,m\in\mathbb{Z}$, $m\geq 1$,
\begin{subequations}\label{s2eq5}
\be
\|Df^{n+m}(x)P_i(x)Df^{-n}(f^n(x))\|\leq K_i \exp{{\left(\sum_{k=n}^{n+m-1} g(f^k(x))\right)}};
\ee
\be
\left(\text{or }\|Df^{n}(x)P_i(x)Df^{-(n+m)}(f^{n+m}(x))\|\leq K_i \exp{\left(-{\sum_{k=n}^{n+m-1} g(f^k(x))}\right)}\right)
\ee
\end{subequations}
\end{definition}
For reducible splittings, we have the following lemma.
\begin{lemma}\label{lemma4}
Let the splitting $T_{\Lambda}M=E_1\oplus\cdots\oplus E_k$ be reducible, i.e. $\|Df^n(x)P_i(x)Df^{-n}(f^n(x))\|\leq K$ for all $n\in\mathbb{Z}$, $x\in\Lambda$ and $i=1,\cdots,k$ with $P_i(x)$ the corresponding projectors. Then the functions
\be\label{s2eq6}
\rho^{+}_{N,i}(x) = \frac{1}{N} \log{\|Df^{N}(x)P_i(x)\|}
\ee
and
\be\label{s2eq7}
\rho^{-}_{N,i-1}(x) = -\frac{1}{N} \log{\|P_i(x)Df^{-N}(f^N(x))\|}
\ee
with $N\in\mathbb{Z}^+$ and $x\in\Lambda$ are upper and lower functions of $E_i$ with respect to $f$ respectively.

\end{lemma}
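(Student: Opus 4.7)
The plan is to reduce both statements, via the $Df$-invariance identity $Df^s(z)P_i(z)=P_i(f^s(z))Df^s(z)$, to a single multiplicative estimate of the form
\[
\|Df^m(y)P_i(y)\| \leq K_i\prod_{k=0}^{m-1}\|Df^N(f^k(y))P_i(f^k(y))\|^{1/N},\qquad y\in\Lambda,\; m\geq 1,
\]
together with its $f^{-1}$-mirror. Specifically, setting $y = f^n(x)$ turns the upper-function inequality into this estimate; setting $z = f^{n+m}(x)$ and using the invariance relation $P_i(f^k(x))Df^{-N}(f^{k+N}(x)) = Df^{-N}(f^{k+N}(x))P_i(f^{k+N}(x))$ converts the lower-function inequality into the analogous statement with $(f, Df^N)$ replaced by $(f^{-1}, Df^{-N})$ and the product running over the index window $l\in [N-m, N-1]$.

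For each shift $s\in\{0,1,\ldots,N-1\}$ (we assume $m\geq N$; the case $m<N$ follows directly from a uniform bound), I would write $m-s = q_s N + r_s$ with $0\leq r_s < N$ and decompose
\[
Df^m(y) = Df^{r_s}(f^{s+q_sN}(y))\,Df^{q_sN}(f^s(y))\,Df^s(y).
\]
Sliding $P_i(y)$ across $Df^s(y)$ by invariance and then telescoping the middle factor into a product of $q_s$ blocks $Df^N(\cdot)P_i(\cdot)$ (using $P_i^2=P_i$ together with invariance between consecutive blocks) gives
\[
\|Df^m(y)P_i(y)\|\leq M^{2N}\prod_{j=0}^{q_s-1}\|Df^N(f^{s+jN}(y))P_i(f^{s+jN}(y))\|,
\]
where $M:=\max\bigl(1,\sup_{\Lambda}\|Df\|,\sup_{\Lambda}\|Df^{-1}\|\bigr)$ absorbs the outer factors since $s, r_s < N$.

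Next, I would multiply these $N$ inequalities over $s=0,1,\ldots,N-1$ to obtain
\[
\|Df^m(y)P_i(y)\|^N \leq M^{2N^2}\prod_{s=0}^{N-1}\prod_{j=0}^{q_s-1}\|Df^N(f^{s+jN}(y))P_i(f^{s+jN}(y))\|.
\]
A short combinatorial argument (via division with remainder applied to $k=s+jN$) shows that the map $(s,j)\mapsto s+jN$ is a bijection between the index set $\{(s,j):0\leq s<N,\;0\leq j<q_s\}$ and $\{0,1,\ldots,m-N\}$, so the double product collapses to $\prod_{k=0}^{m-N}\|Df^N(f^k(y))P_i(f^k(y))\|$. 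To extend the product to the full range $[0,m-1]$ I would invoke the uniform lower bound $\|Df^N(z)P_i(z)\| \geq \|P_i(z)\|/\|Df^{-N}(f^N(z))\| \geq M^{-N}$ (a nonzero projector has operator norm at least $1$), which lets the $N-1$ missing factors near $m-1$ be absorbed into a constant. Taking $N$-th roots then yields the desired bound with $K_i = M^{3N}$, and the lower-function statement follows by running the identical argument for $f^{-1}$ and unwinding the index window $[N-m,N-1]$ back through the invariance identity from the first paragraph.

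The main obstacle is the combinatorial identification in the shift-averaging step---verifying that the $N$ disjoint arithmetic progressions $\{s, s+N, \ldots, s+(q_s-1)N\}$ indeed partition $\{0,1,\ldots,m-N\}$ as $s$ ranges over all residues modulo $N$---together with checking that both the outer factors $Df^{r_s}, Df^s$ and the $N-1$ uncovered indices near $m-1$ contribute only constants independent of $m$. Once this bookkeeping is in place, the rest is routine manipulation using the $Df$-invariance of $P_i$ and the reducibility bound $\|P_i\|\leq K$ (applied at $n=0$).
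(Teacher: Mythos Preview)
Your argument is correct and follows a genuinely different route from the paper's. The paper works vectorwise: it introduces the telescoping log-ratios
\[
p_{n,x,u}(k)=\log\frac{\|Df^{k+1}(f^n(x))P_i(f^n(x))u\|}{\|Df^{k}(f^n(x))P_i(f^n(x))u\|},
\]
invokes Lemma~\ref{lemma3} (the Ces\`aro-type bound $\bigl|\sum_k(p(k)-p_N(k))\bigr|<\|p\|N$) to replace $\sum_k p(k)$ by the averaged sum $\sum_k p_N(k)$ at constant cost, and then bounds each $p_N(k)$ above by $\rho^{+}_{N,i}(f^{k+n}(x))$; reducibility enters only at the last line to control $\|Df^{n}(x)P_i(x)Df^{-n}(f^n(x))u\|$. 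Your shift-and-multiply scheme bypasses Lemma~\ref{lemma3} altogether and works directly with operator norms: the combinatorial fact that the $N$ arithmetic progressions $\{s,s+N,\dots,s+(q_s-1)N\}$ partition $\{0,1,\dots,m-N\}$ plays exactly the role of the paper's averaging lemma, and reducibility appears only as $\|P_i\|\le K$ to dispose of the case $m<N$. Both methods produce constants of order $M^{O(N)}$; yours is somewhat more self-contained, while the paper's isolates the averaging mechanism into a reusable statement. The one place where your write-up should be more explicit is the lower-function reduction: after passing to $f^{-1}$ the index window you obtain is shifted by $N-1$ relative to the window $[0,m-1]$ used in the upper-function template, so an additional $2(N-1)$ boundary factors (each bounded above and below as you note) must also be absorbed into $K_i$ there.
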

\begin{proof}
For $u\in T_xM$ with $P_i(x)u\neq 0$, define for each $n\in\mathbb{Z}$,
\begin{equation}\label{s2eq1}
p_{n,x,u}(k) = \log{\frac{\|Df^{k+1}(f^n(x)) P_i(f^n(x)) u\|}{\|Df^{k}(f^n(x))P_i(f^n(x))u\|}},
\end{equation}
and denote
\begin{equation}\label{s2eq2}
p_{N,n,x,u}(k)=\frac{1}{N}\sum_{j=0}^{N-1}p_{n,x,u}(j+k),
\end{equation}
for each fixed $N\in\mathbb{Z}^+$. Clearly we have $p_{n,x,\xi}(k)\in [-\log{\|Df\|}, \log{\|Df\|}]$, for all $n$, $x$, $k$ and $u$.

By rewriting and using Lemma \ref{lemma3}, we get,
\bea\label{s2eq9}
& &\frac{\|Df^{m}(f^n(x))P_i(f^n(x))u\|}{\|P_i(f^n(x))u\|} \nonumber \\[3mm]
& = & \frac{\|Df^{m}(f^n(x))P_i(f^n(x))u\|}{\|Df^{m-1}(f^n(x))P_i(f^n(x)) u\|}\frac{\|Df^{m-1}(f^n(x))P_i(f^n(x))u\|}{\|Df^{m-2}(f^n(x))P_i(f^n(x)) u\|} \nonumber \\
&& \qquad \cdots\frac{\|Df(f^n(x)) P_i(f^n(x))u\|}{\|P_i(f^n(x))u\|} \nonumber \\[3mm]
& = & \exp{\left(\sum_{k=0}^{m-1}\log{\frac{\|Df^{k+1}(f^n(x))P_i(f^n(x)) u\|}{\|Df^{k}(f^n(x))P_i(f^n(x))u\|}}\right)} \nonumber \\[3mm]
& = & \exp{\left(\sum_{k=0}^{m-1} p_{n,x,\xi}(k)\right)} \nonumber \\[3mm]
& < & \exp{\left(N\log \|Df\|\right)}\exp{\left(\sum_{k=0}^{m-1} p_{N,n,x,u}(k)\right)}
\eea

Next we have the following estimate on $p_{N,n,x,u}(k)$.
\bea\label{s2eq10}
p_{N,n,x,u}(k) & = & \frac{1}{N}\sum_{j=0}^{N-1} p_{n,x,u}(j+k) \nonumber \\[3mm]
& = & \frac{1}{N} \log{\frac{\|Df^{k+N}(f^n(x))P_i(f^n(x)) u\|}{\|Df^{k}(f^n(x))P_i(f^n(x))u\|}} \nonumber\\[3mm]
& \leq & \frac{1}{N}\log{\|Df^{k+N}(f^n(x))P_i(f^n(x))Df^{-k}(f^{n+k}(x))\|} \nonumber \\[3mm]
& = & \rho^{+}_{N,i}(f^{k+n}(x)).
\eea

In the last equality, we used the $Df$-invariance of projector $P_i$, that is,
\[Df(x)P_i(x)=P_i(f(x))Df(x),\quad\text{for all } x\in\Lambda\]
and the unfolding $Df^{n+m}(x)=Df^m(f^{n}(x))Df^{n}(x)$, where $m,n\in\mathbb{Z}$.

Combine \eqref{s2eq9} and \eqref{s2eq10}, we readily have
\bea\label{s2eq12}
&&\|Df^{n+m}(x)P_i(x) Df^{-n}(f^n(x))u\|\nonumber\\[3mm]
&=& \|Df^{m}(f^n(x))P_i(f^n(x))Df^n(x)P_i(x) Df^{-n}(f^n(x))u\|\nonumber\\[3mm]
& \leq & \|Df^{n}(x) P_i(x) Df^{-n}(f^n(x))u\| \nonumber \\
&& \qquad \exp{(N\log\|Df\|)}\exp{\left(\sum_{k=0}^{m-1}\rho^{+}_{H,i}(f^{k+n}(x))\right)} \nonumber\\[3mm]
& \leq & K \exp{\left(\sum_{k=n}^{n+m-1}\rho^{+}_{H,i}(f^{k}(x))\right)\|u\|},
\eea
where the last step uses that the splitting is reducible and $K$ also depends on $N$. This shows that $\rho^{+}_{H,i}(\cdot)$ is an upper function of $E_i$ with respect to $f$. Similarly we can get $\rho^{-}_{H,i-1}(\cdot)$ is a lower function of $E_i$ with respect to $f$.
\end{proof}

Now we are ready to give another characterization of dominated splitting in terms of upper and lower functions constructed above.
\begin{theorem}
The splitting $T_{\Lambda}M=E_1\oplus\cdots\oplus E_k$ is $(n_1,\cdots,n_k)$-dominated if and only if $\rho^+_{N,i}$ and $\rho^-_{N,i}$, $N$ big enough, defined in Lemma \ref{lemma4} are supper and lower functions of $E_i$ and $E_{i+1}$ with respect to $f$ respectively and there exist a constant $\alpha>0$ such that for for all $x\in\Lambda$,
\begin{equation}\label{slope}
\rho^+_{N,i}(x) - \rho^-_{N,i}(x)\leq \frac{\alpha}{2},\quad i=1,\cdots,k-1.
\end{equation}
\end{theorem}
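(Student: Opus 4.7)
The plan is to prove both directions by combining Theorem \ref{mainthm1} (which reduces domination to the consecutive pair estimate \eqref{eq1}) with Lemma \ref{lemma4} (which supplies the upper/lower function property once reducibility is known). The gap condition \eqref{slope} is then just the ``rate'' extracted from \eqref{eq1} after specialising $n=0$, $m=N$ and taking $\frac{1}{N}\log$.

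For the \emph{necessity}, suppose the splitting is $(n_1,\dots,n_k)$-dominated. By Lemma \ref{lemma2} it is reducible, so Lemma \ref{lemma4} immediately asserts that $\rho^+_{N,i}$ is an upper function of $E_i$ and, applying that same lemma with index $i+1$, that $\rho^-_{N,i}$ (which is $\rho^-_{N,(i+1)-1}$) is a lower function of $E_{i+1}$. To produce the quantitative gap I would invoke Theorem \ref{mainthm1} to get $K>1$, $\alpha_0>0$ with
$$
\|Df^{n+m}(x)P_i(x)Df^{-n}(f^n(x))\|\,\|Df^n(x)P_{i+1}(x)Df^{-(n+m)}(f^{n+m}(x))\|\leq Ke^{-m\alpha_0},
$$
then specialise to $n=0$, $m=N$, giving $\|Df^N(x)P_i(x)\|\,\|P_{i+1}(x)Df^{-N}(f^N(x))\|\le Ke^{-N\alpha_0}$. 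Taking $\frac{1}{N}\log$ collapses the left side exactly to $\rho^+_{N,i}(x)-\rho^-_{N,i}(x)$, so we obtain
$$
\rho^+_{N,i}(x)-\rho^-_{N,i}(x)\le \tfrac{\log K}{N}-\alpha_0,
$$
uniformly in $x\in\Lambda$, and choosing $N$ large enough absorbs $\log K/N$ and yields \eqref{slope} with the appropriate $\alpha$.

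For the \emph{sufficiency}, the upper/lower function hypotheses give constants $K_i,K_{i+1}$ with
$$
\|Df^{n+m}(x)P_i(x)Df^{-n}(f^n(x))\|\le K_i\exp\!\Bigl(\sum_{k=n}^{n+m-1}\rho^+_{N,i}(f^k(x))\Bigr)
$$
and the dual estimate for $E_{i+1}$ with the factor $\exp(-\sum_k\rho^-_{N,i}(f^k(x)))$. Multiplying these two inequalities the exponentials combine into a single sum of $\rho^+_{N,i}(f^k(x))-\rho^-_{N,i}(f^k(x))$, and inserting the pointwise gap from \eqref{slope} bounds the product by $K_iK_{i+1}\,e^{-m\alpha/2}$ (after correctly signing the gap). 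This is precisely the hypothesis \eqref{eq1} of Theorem \ref{mainthm1} for the consecutive pair $(i,i+1)$, so that theorem returns the $(n_1,\dots,n_k)$-dominated splitting.

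The main subtlety I expect is the bookkeeping in the necessity direction: the constant $K_i$ supplied by Lemma \ref{lemma4} depends on $N$ (through the $\exp(N\log\|Df\|)$ factor appearing in \eqref{s2eq12}), so one has to verify that after dividing by $N$ this dependence contributes only an $O(1/N)$ term to the gap, and that it can be dominated by the genuine exponential rate $\alpha_0$ from Theorem \ref{mainthm1} once $N$ is chosen large. Everything else is a clean assembly of the earlier lemmas.
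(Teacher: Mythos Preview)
Your proof is correct. The necessity direction matches the paper's essentially line for line: both invoke Lemma~\ref{lemma2} and Lemma~\ref{lemma4} for the upper/lower function property, then specialise Theorem~\ref{mainthm1} to $n=0$, $m=N$ and take $\frac{1}{N}\log$ to extract the gap. (Your closing worry about the $N$-dependence of the constant $K_i$ from Lemma~\ref{lemma4} is misplaced: that constant never enters the gap computation, since $\rho^+_{N,i}(x)-\rho^-_{N,i}(x)$ is computed directly from the definitions \eqref{s2eq6}--\eqref{s2eq7}; only the $K$ from Theorem~\ref{mainthm1} appears, exactly as in your main paragraph.)

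For the sufficiency you take a genuinely different and shorter route than the paper. The paper does \emph{not} use the upper/lower function inequalities \eqref{s2eq5} at all in this direction; instead it goes back to the raw definitions of $\rho^\pm_{N,i}$, factors $Df^{lN}(x)P_i(x)$ into $l$ blocks of length $N$ via the invariance $P_i(f^N(y))Df^N(y)=Df^N(y)P_i(y)$, applies the gap \eqref{slope} block by block to obtain $\|Df^{lN}(x)P_i(x)\|\,\|P_{i+1}(x)Df^{-lN}(f^{lN}(x))\|\le e^{-lN\alpha/2}$, and then handles a general $m=lN+k$ by bounding the $k$-step remainder crudely. Your argument bypasses this decomposition: you simply multiply the two upper/lower bounds, collapse the exponent to $\sum_{k=n}^{n+m-1}\bigl(\rho^+_{N,i}-\rho^-_{N,i}\bigr)(f^k(x))$, apply \eqref{slope} pointwise, and land directly on \eqref{eq1}. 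This is shorter and makes full use of the hypotheses as stated; the paper's route, on the other hand, shows that for sufficiency the gap \eqref{slope} together with bounded projectors already forces \eqref{eq1} without ever invoking the abstract upper/lower estimates, which has some independent interest.
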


\begin{proof}
We first suppose that the splitting $T_{\Lambda}M=E_1\oplus\cdots\oplus E_k$ is $(n_1,\cdots,n_k)$-dominated. Let $P_i$, $i=1,\cdots,n$ be the corresponding projectors on $T_{\Lambda}M$. Then by Lemma \ref{lemma4}, we are left to show \eqref{slope}. In fact, by choosing $N$ big enough such that $N>2\alpha^{-1}\log{K}$, and a direct calculation shows
\begin{eqnarray*}
& & \rho^+_{N,i}(x) - \rho^-_{N,i}(x) \nonumber \\[3mm]
& = & \frac{1}{N}\log\{\|Df^{N}(x)P_i(x)\|\|P_{i+1}(x)Df^{-N}(f^{N}(x))\|\}\nonumber \\[3mm]
& \leq & \frac{1}{N} \log\{Ke^{-\alpha N}\} \leq -\frac{\alpha}{2}.
\end{eqnarray*}
In the first inequality, we used Theorem \ref{mainthm1} by letting $m=N$ and $n=0$.

Next we fix $i$ and suppose for some $N$, $\rho^+_{N,i}$ and $\rho^-_{N,i}$ are the supper and lower functions of $E_i$ and $E_{i+1}$ with respect to $f$ respectively and \eqref{slope} holds. By making use of the invariance property of projector $P_i$, i.e., $P_i(f^n(x))Df^n(x)=Df^n(x)P_i(x)$, $n\in\mathbb{Z}$ and $P_i^2=P_i$ one can show for $l\in\mathbb{Z}$, $l\geq 1$ that
\[\|Df^{lN}(x)P_i(x)\|\|P_{i+1}(x)Df^{-lN}(f^{lN}(x))\|\leq \exp\{-\frac{lN}{2}\alpha\}.\]
Indeed,
\begin{eqnarray*}
&&\|Df^{lN}(x)P_i(x)\|\|P_{i+1}(x)Df^{-lN}(f^{lN}(x))\|\\[3mm]
&=&\|Df^{N}(f^{(l-1)N}(x))P_i(f^{(l-1)N}(x))\cdots Df^N(f^N(x))P_{i}(f^N(x)) \nonumber \\
&& Df^N(x)P_i(x)\| \|P_{i+1}(x)Df^{-N}(f^N(x))P_{i+1}(f^N(x))Df^{-N}(f^{2N}(x)) \nonumber \\
&& \cdots P_{i+1}(f^{(l-1)N}(x))Df^{-N}(f^{lN}(x))\|\\[3mm]
&\leq & \prod_{k=0}^{l-1}\big(\|Df^N(f^{kN}(x))P_i(f^{kN}(x))\|\|P_{i+1}(f^{kN}(x))Df^{-N}(f^{kN}(x))\|\big)\\[3mm]
&= & \prod_{k=0}^{l-1}\exp{\{N(\rho^+_{N,i}(f^{kN}(x)) - \rho^-_{N,i}(f^{kN}(x)))\}}\\[3mm]
&\leq & \exp\{-\frac{lN}{2}\alpha\}.
\end{eqnarray*}
For any fixed $m,n\in\mathbb{Z}$, $m\geq 1$. Assume $m=lN+k$ for some $l,k\in\mathbb{Z}$, $0\leq k< N$ and denote $f^n(x)=y$. We have
\begin{eqnarray*}
&&\|Df^{n+m}(x)P_i(x)Df^{-n}(f^n(x))\|\|Df^n(x)P_{i+1}(x)Df^{-(m+n)}(f^{m+n}(x))\|\\[3mm]
&=&\|Df^{lN+k}(y)P_i(y)\|\|P_{i+1}(y)Df^{-(lN+k)}(f^{lN+k}(y))\|\\[3mm]
&=&\|Df^{lN}(f^k(y))P_i(f^k(y))Df^k(y)P_i(y)\| \nonumber \\
&& \qquad \|P_{i+1}(y)Df^{-k}(f^k(y))P_{i+1}(f^k(y))Df^{lN}(f^{lN}(f^k(y)))\|\\[3mm]
&\leq & \|Df^{lN}(f^k(y))P_i(f^k(y))\|\|P_{i+1}(f^k(y))Df^{-lN}(f^{lN}(f^k(y)))\|\\
&& \qquad \|Df^k(y)P_i(y)\|\|P_{i+1}(y)Df^{-k}(f^k(y))\|\\[3mm]
&\leq & \tilde{C}\exp\{-\frac{\alpha}{2}(m-k)\} \leq  C\exp\{-\frac{\alpha}{2}m\}.
\end{eqnarray*}

Note that above constants $C$ and $\alpha$ do not dependent on the choice of $i$, therefore by Theorem \ref{mainthm1}, the splitting $T_{\Lambda}M=E_1\oplus\cdots\oplus E_k$ is dominated.
\end{proof}

\section{Summerably Separation}

In this section we introduce the notion of summably separated functions with respect to a dynamical system and use it to characterize dominated splitting. It also states that after a functional torsion to the dominated splitting system it becomes to hyperbolic.

\begin{definition}
Let $f: M\to M$ be a diffeomorphism on a closed manifold $M$ and $\Lambda\subset M$ be any compact $f$-invariant set. A sequence of real continuous functions $p_i: \Lambda\to\mathbb{R}$, $i=1,2,\cdots,k$ are called \emph{summably separated} with respect to $f$ on $\Lambda$ if there exist $\beta\geq 0$, $\gamma>0$, such that for $i=1,\cdots,k-1$,
\be\label{s3eq1}
\sum_{k=n}^{n+m-1}\left(p_{i+1}(f^k(x)) - p_i(f^k(x))\right) \geq -\beta+\gamma m,
\ee
for all $m,n\in\mathbb{Z}^+$, $m\geq 1$ and $x\in\Lambda$.
\end{definition}

\begin{theorem}
Let $f: M\to M$ be a diffeomorphism on a closed manifold $M$ and $\Lambda\subset M$ be any compact $f$-invariant set. A splitting $T_{\Lambda}M = E_1 \oplus\cdots\oplus E_{k}$ of tangent bundle over $\Lambda$ is $(n_1, \cdots, n_k)$-dominated if and only if there exist continuous real functions $p_i: \Lambda\to\mathbb{R}^{+}$, $i=1,\cdots,k$, with $\log p_1,\cdots,\log p_k$ are summably separated with respect to $f$, such that for $i=1,\cdots,k$ the linear cocycle $(f,p_iDf)$ admits a hyperbolicity over $\Lambda$ with stable subspace of dimension $n_1+\cdots+n_i$.
\end{theorem}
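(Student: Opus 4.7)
This is an iff, so I plan to handle each direction separately, using the upper/lower function machinery of Section~3 as the bridge.

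\textbf{Necessity.} Assume the splitting is $(n_1,\ldots,n_k)$-dominated. By Lemma~\ref{lemma4}, $\rho^+_{N,i}$ is an upper function of $E_i$ and $\rho^-_{N,i}$ is a lower function of $E_{i+1}$, and by the theorem of Section~3 I may fix $N$ large enough that $\rho^+_{N,i}(x) - \rho^-_{N,i}(x) \le -\alpha/2$ for all $x \in \Lambda$ and $i = 1, \ldots, k-1$. Chaining via $\rho^-_{N,i} \le \rho^+_{N,i+1}$ shows that $\{\rho^+_{N,j}\}_j$ is monotone with a uniform $\alpha/2$-gap between consecutive terms, and likewise for $\{\rho^-_{N,j}\}_j$. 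This suggests the definition
\[
\log p_i(x) := -\tfrac{1}{2}\bigl(\rho^+_{N,i}(x) + \rho^-_{N,i}(x)\bigr), \quad i = 1, \ldots, k-1,
\]
with a boundary extension for $p_k$ (say $\log p_k \equiv -\rho^+_{N,k} - \alpha/4$, chosen below the top upper function). By construction, $\log p_i + \rho^+_{N,i} \le -\alpha/4$ and $\log p_i + \rho^-_{N,i} \ge \alpha/4$. Iterating along an orbit and inserting into the upper/lower function inequalities \eqref{s2eq5} produces contraction of $p_iDf$ at rate $\alpha/4$ on each $E_j$ with $j \le i$ (using the monotonicity to replace $\rho^+_{N,j}$ by $\rho^+_{N,i}$) and expansion on each $E_j$ with $j \ge i+1$. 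The reducibility bound of Lemma~\ref{lemma2} extends these single-subspace estimates to the direct sums $E_1 \oplus \cdots \oplus E_i$ and $E_{i+1} \oplus \cdots \oplus E_k$ by absorbing oblique-projector norms, giving hyperbolicity of $(f, p_iDf)$ with the prescribed stable subspace. Summable separation of $\{\log p_i\}$ follows by telescoping: each $\log p_{i+1} - \log p_i$ is (up to sign) a half-sum of two consecutive $\rho^\pm$-gaps, hence separated from zero by at least $\alpha/2$ per orbit step.

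\textbf{Sufficiency.} Conversely, given the $p_i$'s, I will translate hyperbolicity of each $(f, p_jDf)$ into pointwise log-norm estimates for $Df^n$. Unfolding $(p_jDf)^n(x) = \prod_{k=0}^{n-1} p_j(f^k x) \cdot Df^n(x)$, the stable half yields for $u \in E_j(x)$
\[
\log\bigl(\|Df^n(x)u\|/\|u\|\bigr) \le \log C - n\alpha - \sum_{k=0}^{n-1}\log p_j(f^k x),
\]
and the unstable half yields for $v \in E_{j+1}(x)$
\[
\log\bigl(\|Df^n(x)v\|/\|v\|\bigr) \ge -\log C + n\alpha - \sum_{k=0}^{n-1}\log p_j(f^k x).
\]
Subtracting cancels the $\sum \log p_j$ terms exactly, producing the consecutive-pair domination $\|Df^n u\|\,\|v\|/(\|u\|\,\|Df^n v\|) \le C^2 e^{-2n\alpha}$. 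Applying the same argument at the base point $f^n(x)$ for $m$ iterates recovers the full time-shifted form of \eqref{s1eq4} with $(i,j) = (j, j+1)$, and Theorem~\ref{mainthm1} then promotes consecutive-pair to full $(n_1,\ldots,n_k)$-domination. Summable separation itself is not required in this direction.

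\textbf{Main obstacle.} The delicate step is the necessity direction. Lemma~\ref{lemma4} supplies upper functions attached to single subspaces $E_i$, so extending the contraction estimate to the direct sum $E_1 \oplus \cdots \oplus E_i$ requires both the monotonicity of $\{\rho^+_{N,j}\}$ (from the $\alpha/2$-gap chain) and the reducibility bound of Lemma~\ref{lemma2} to tame the oblique projector norms uniformly. A subsidiary bookkeeping point is aligning the sign convention in the summable separation definition with the telescoped differences of the constructed $\log p_i$; the uniform $\alpha/2$-gap in the $\rho^\pm$-chain provides the required ergodic-average separation in magnitude either way.
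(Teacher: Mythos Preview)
Your necessity argument is sound and runs closely parallel to the paper's, the only real difference being the explicit choice of the torsion: the paper sets $\log p_i=-\rho^{+}_{N,i}-\lambda$ for a fixed $0<\lambda<\alpha/2$, while you take the midpoint $-\tfrac12(\rho^{+}_{N,i}+\rho^{-}_{N,i})$. Both choices lie strictly between $-\rho^{-}_{N,i}$ and $-\rho^{+}_{N,i}$, so either one feeds into the upper/lower function inequalities \eqref{s2eq5} to give contraction on $E_1\oplus\cdots\oplus E_i$ and expansion on $E_{i+1}\oplus\cdots\oplus E_k$ for $(f,p_iDf)$. The paper complements the pointwise gap $\rho^{+}_{N,i}-\rho^{-}_{N,i}\le -\alpha/2$ with the summed inequality $\sum_{k=n}^{n+m-1}(\rho^{+}_{N,i}-\rho^{-}_{N,i-1})(f^kx)\ge -2\log K_i$, coming from $\|Df^{n+m}P_iDf^{-(n+m)}\|\ge 1$, to obtain the summable separation of the $\log p_i$; your pointwise chain $\rho^{+}_{N,i}\le\rho^{-}_{N,i}\le\rho^{+}_{N,i+1}$ achieves the same end.

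Your sufficiency argument, however, has a genuine gap, and it is exactly the step you dismiss. You write ``for $u\in E_j(x)$'' and ``for $v\in E_{j+1}(x)$'' and then invoke the stable (resp.\ unstable) estimate of $(f,p_jDf)$ on $u$ (resp.\ $v$). But the hypothesis only gives, for each $i$, a hyperbolic splitting $T_\Lambda M=\mathrm{range}(Q_i)\oplus\mathrm{range}(I-Q_i)$ of the \emph{prescribed dimension}; it neither hands you the bundles $E_j$ nor guarantees a priori that $\mathrm{range}(Q_i)\subset\mathrm{range}(Q_{i+1})$. In the paper's proof this nesting is precisely what the summable-separation hypothesis buys: from $\sum_{k}(\log p_{i+1}-\log p_i)(f^kx)\ge -\beta+\gamma m$ one deduces that the stable bundle of $(f,p_iDf)$ sits inside that of $(f,p_{i+1}Df)$, and only then does one \emph{define} $P_i:=Q_i-Q_{i-1}$ and $E_i:=\mathrm{range}(P_i)$. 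Without summable separation the $Q_i$'s need not be nested at all (already at a single hyperbolic fixed point one can pick the scalars $p_i$ in the wrong order and get hyperbolic cocycles whose stable subspaces have the right dimensions but are not nested), so your assertion that ``summable separation itself is not required in this direction'' is incorrect. Once nesting is established, the cancellation of the $\prod p_i$ factors you describe is exactly how the paper concludes, via
\[
\|Df^{n+m}P_iDf^{-n}\|\,\|Df^{n}P_{i+1}Df^{-(n+m)}\|=\|A_i^{n+m}P_iA_i^{-n}\|\,\|A_i^{n}P_{i+1}A_i^{-(n+m)}\|
\]
together with Theorem~\ref{mainthm1}.
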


\begin{proof}
We first suppose that the splitting $T_{\Lambda}M=E_1\oplus\cdots\oplus E_k$ is $(n_1,\cdots,n_k)$-dominated. Let $P_i$, $i=1,\cdots,n$ be the corresponding projectors on $T_{\Lambda}M$. Then by Theorem \ref{mainthm1}, there exist constants $K>1$ and $\alpha>0$ such that \eqref{eq1} holds. In addition, by the reducible property of dominated splitting (Lemma \ref{lemma2}) and Lemma \ref{lemma4}, $\rho_{N,i}^+(\cdot)$ and $\rho_{N,i-1}^-$ (see \eqref{s2eq6}, \eqref{s2eq7}) are the upper and lower functions of $E_i$ with respect to $f$ respectively. That is, one can finds constants $K_i> 1$, $i=1,\cdots,k$ such that \eqref{s2eq5} hold.
Choose $N$ with $N>2\alpha^{-1}\log{K}$, then we have the following estimate of the difference between an upper and a lower function
\bea\label{s3eq4}
& & \rho^+_{N,i}(f^k(x)) - \rho^-_{N,i}(f^k(x)) \nonumber \\[3mm]
& = & \frac{1}{N}\log\{\|Df^{k+N}(x)P_i(x) Df^{-k}(f^k(x))\| \nonumber \\
&& \qquad \|Df^{k}(x) P_{i+1}(x)Df^{-(k+N)}(f^{k+N}(x))\|\}\nonumber \\[3mm]
& \leq & \frac{1}{N} \log\{Ke^{-\alpha N}\} \leq -\frac{\alpha}{2}.
\eea
In the first inequality, we used Theorem \ref{mainthm1}.

On the other hand, we have for all $n, m\in\mathbb{Z}$ with $m\geq 1$,
\bea\label{s3eq5}
1 & \leq & \|Df^{n+m}(x)P_i(x)Df^{-(n+m)}(f^{n+m}(x))\| \nonumber\\[3mm]
& \leq & \|Df^{n+m}(x)P_i(x)Df^{-n}(f^n(x))\| \|Df^{n}(x)P_i(x) Df^{-(n+m)}(f^{n+m}(x))\| \nonumber\\[3mm]
&\leq & K_i^2 \exp{\left(\sum_{k=n}^{n+m-1}[\rho^+_{N,i}(f^k(x)) - \rho^-_{N,i-1}(f^k(x))]\right)},\nonumber
\eea
which implies
\be\label{s3eq6}
\sum_{k=n}^{n+m-1}[\rho^+_{N,i}(f^k(x)) - \rho^-_{N,i-1}(f^k(x))] \geq -2\log{K_i}
\ee

Combine \eqref{s3eq4} and \eqref{s3eq6}, we get an estimate between two adjacent upper functions
\bea\label{s3eq7}
& & \sum_{k=n}^{n+m-1}[\rho^+_{N,i}(f^k(x)) - \rho^+_{N,i-1}(f^k(x))]  \nonumber\\[3mm]
& = & \sum_{k=n}^{n+m-1}\left([\rho^+_{N,i}(f^k(x)) - \rho^-_{N,i-1}(f^k(x))]- \right.\nonumber\\
&& \left.[\rho^+_{N,i-1}(f^k(x))-\rho^-_{N,i-1}(f^k(x))]\right)\nonumber \\[3mm]
& \geq & -2\log{K_i} + \frac{\alpha}{2}m.
\eea

We choose $p_i(x)$ to have the form
\be\label{s3eq12}
p_i(x) = \exp{\left(-\rho^+_{N,i}(x)-\lambda\right)}, \ \lambda>0.
\ee

Since $\rho^+_{N,i}(\cdot)$ is continuous on $\Lambda$ and $\rho^+_{N,i}(x)\in [-\log\|Df\|,\log\|Df\|]$ for all $x\in\Lambda$ and $i=1,\cdots, k$. \eqref{s3eq12} implies that functions $p_1,\cdots,p_k$ are summablely separated with respect to $f$.

Moreover, we have
\bea\label{s3eq8}
& & \|Df^{n+m}(x)(P_1(x)+\cdots+P_i(x))Df^{-n}(f^n(x))\| \nonumber \\[3mm]
& \leq & \sum_{j=1}^{i}\|Df^{n+m}(x) P_j(x)Df^{-n}(f^n(x))\| \nonumber \\[3mm]
& \leq & (K_i+K_{i-1}K_i^2+\cdots+K_1K_2^2\cdots K_i^2) \exp{\left(\sum_{k=n}^{n+m-1}\rho^+_{N,i}(f^k(x))\right)} \nonumber\\[3mm]
& = & M_i \exp{\left(\sum_{k=n}^{n+m-1}\rho^+_{N,i}(f^k(x))\right)}
\eea

By a similar argument, we can also get
\begin{multline}\label{s3eq9}
\|Df^{n}(x)(P_{i+1}(x)+\cdots+P_k(x))Df^{-(n+m)}(f^{n+m}(x))\| \\[3mm]
\leq N_i \exp{\left(-\sum_{k=n}^{n+m-1}\rho^-_{N,i}(f^k(x))\right)}
\end{multline}

Therefore for the cocycle generated by $A_i(x)=p_i(x)Df(x)$, with
\bea\label{s3eq10}
A_i^n(x)
& = & p_i(f^{n-1}(x))Df(f^{n-1}(x)) p_i(f^{n-2}(x))Df(f^{n-2}(x)) \nonumber\\
&& \qquad\cdots  p_i(x)Df(x) \nonumber\\[3mm]
& = & \left(\prod_{k=0}^{n-1}p_i(f^k(x))\right)Df^n(x),
\eea
\bea\label{s3eq22}
A_i^{-m}(x) & = & p_i^{-1}(f(x))Df^{-1}(f(x)) p_i^{-1}(f^{2}(x))Df^{-1}(f^{2}(x)) \nonumber \\
&& \qquad\cdots p_i^{-1}(f^m(x))Df^{-1}(f^m(x)) \nonumber\\[3mm]
& = & \left(\prod_{k=1}^{m}p_i^{-1}(f^k(x))\right)Df^{-m}(x),
\eea
where $p_i^{-1}(f^k(x))=\frac{1}{p_i(f^{k-1}(x))}$, we have
\bea\label{s3eq11}
& & \|A_i^{n+m}(x)(P_1(x)+\cdots+P_i(x))A_i^{-n}(f^n(x))\| \nonumber \\[3mm]
& = & \bigg{\|}\left(\prod_{k=0}^{n+m-1}p_i(f^k(x))\right)Df^{n+m}(x) \left(P_1(x)+\cdots+P_i(x)\right) \nonumber \\
&& \qquad \left(\prod_{k=1}^{n}p_i^{-1}(f^k(x))\right)Df^{-n}(f^n(x))\bigg\| \nonumber \\[3mm]
& = & \prod_{k=n}^{n+m-1}p_i(f^k(x))\|Df^{n+m}(x)\left(P_1(x)+\cdots+P_i(x)\right) Df^{-n}(f^n(x)) \|\nonumber \\[3mm]
& \leq & M_i\prod_{k=n}^{n+m-1}p_i(f^k(x)) \exp{\left(\sum_{k=n}^{n+m-1} \rho^+_{H,i}(f^k(x))\right)} \quad (\mbox{by} \  (\ref{s3eq8}))
\eea

Then  \eqref{s3eq11} implies from \eqref{s3eq12} that
\be\label{s3eq13}
\|A_i^{n+m}(x)(P_1(x)+\cdots+P_i(x)) A_i^{-n}(f^n(x))\|\leq M_i e^{-m\lambda}.
\ee
On the other hand,
\bea\label{s3eq14}
& & \|A_i^{n}(x)(P_{i+1}(x)+\cdots+P_k(x)) A_i^{-(n+m)}(f^{n+m}(x))\| \nonumber \\[3mm]
& = & \bigg\|\left(\prod_{k=0}^{n-1}p_i(f^k(x))\right)Df^{n}(x) \left(P_{i+1}(x)+\cdots+P_k(x)\right) \nonumber \\
&& \qquad\left(\prod_{k=1}^{n+m}p_i^{-1}(f^k(x))\right)Df^{-(n+m)}(f^{n+m}(x))\bigg\| \nonumber \\[3mm]
& = & \prod_{k=n+1}^{n+m}p_i^{-1}(f^k(x))\|Df^{n}(x)\left(P_{i+1}(x)+\cdots+P_k(x)\right) \nonumber \\
&& \qquad Df^{-(n+m)}(f^{n+m}(x)) \|\nonumber \\[3mm]
& \leq & N_i \exp{\left(-\sum_{k=n}^{n+m-1} [\rho^-_{N,i}(f^k(x))-\rho^+_{N,i}(f^k(x))-\lambda]\right)} 
\nonumber\\[3mm]
& \leq & N_i e^{-m(\frac{\alpha}{2}-\lambda)}
\eea

Therefore from \eqref{s3eq13}, \eqref{s3eq14}, if we constraint that $0<\lambda<\frac{\alpha}{2}$, the linear cocycle $(f,A_i)=(f,p_iDf)$ admit a hyperbolicity over $\Lambda$. Moreover, the dimension of its stable manifold is $\dim(P_1+\cdots+P_i)=n_1+\cdots+n_i$.

Now we assume that  there exist positive bounded real continuous functions $p_i(x)$, $i=1,\cdots,k$ on $\Lambda$ with $\log p_1,\cdots,\log p_i$ summablely separated, such that each linear cocycle $(f,p_iDf)$ admits hyperbolicity over $\Lambda$ with corresponding projectors $Q_i$ and stable subbundle $Q_i(T_{\lambda}M)$ of dimension $n_1+n_2+\cdots+n_i$. By hyperbolicity, one can finds constants $C>1$ and $\alpha>0$ such that for $i=1,\cdots,k$,
\begin{eqnarray}
&&\|A_i^{n+m}(x) Q_i(x)A_i^{-n}(f^n(x))\|\leq Ce^{-m\alpha},\label{s3eq23}\\[3mm] &&\|A_i^{n}(x)(I-Q_i(x))A_i^{-(n+m)}(f^{n+m}(x))\|\leq Ce^{-m\alpha},\label{s3eq15}
\end{eqnarray}
for all $x\in\Lambda$ and $n,m\in\mathbb{Z}$, $m\geq 1$. Here $A^n_i$ and $A^{-m}_i$ are defined as \eqref{s3eq10} and \eqref{s3eq22} respectively.

Since $\log p_1,\cdots,\log p_k$ are summablely separated, it is not hard to see that $\text{range}(Q_1)\subsetneqq\cdots\subsetneqq \text{range}(Q_k)$. Then there exists unique supplementary invariant projectors $P_i$, $i=1,\cdots,k$, on $T_{\Lambda}M$ such that $\dim P_i=n_i$ and $Q_i=P_1+\cdots+P_i$, $I-Q_i=P_{i+1}+\cdots+P_k$.

We claim:
\be\label{s3eq16}
\|A_i^{n+m}(x)P_i(x)A_i^{-n}(f^n(x))\|\leq 2C^2e^{-m\alpha}.
\ee

To see this, first we have by using $P_i(x)=Q_i(x)-Q_{i-1}(x)$ that
\begin{multline}\label{s3eq17}
\|A_i^{n+m}(x) P_i(x)A_i^{-(n+m)}(f^{n+m}(x))\| \leq \|A_i^{n+m}(x)Q_i(x) \\[3mm]
A_i^{-(n+m)}(f^{n+m}(x))\|+\|A_i^{n+m}(x) Q_{i-1}(x)A_i^{-(n+m)}(f^{n+m}(x))\|
\end{multline}

The first term is clearly bounded by constant $C$ by letting $m=0$ in \eqref{s3eq23}. For the second term, we have
\bea\label{s3eq18}
& & \|A_i^{n+m}(x)Q_{i-1}(x)A_i^{-(n+m)}(f^{n+m}(x))\| \nonumber \\[3mm]
& = & \|\prod_{k=0}^{n+m-1}p_i(f^k(x))Df^{n+m}(x)Q_{i-1}(x) Df^{-(n+m)}(f^{n+m}(x))\nonumber \\
&& \qquad \prod_{k=1}^{n+m}p_{i}^{-1}(f^k(x)) \| \nonumber\\[3mm]
& = & \|Df^{n+m}(x)Q_{i-1}(x)Df^{-(n+m)}(f^{n+m}(x))\| \nonumber \\[3mm]
& = & \|\prod_{k=0}^{n+m-1}p_{i-1}(f^k(x))Df^{n+m}(x)Q_{i-1}(x) Df^{-(n+m)}(f^{n+m}(x))\nonumber \\
&&\qquad \prod_{k=1}^{n+m}p_{i-1}^{-1}(f^k(x)) \|\nonumber \\[3mm]
& = & \|A_{i-1}^{n+m}(x) Q_{i-1}(x) A_{i-1}^{-(n+m)}(f^{n+m}(x))\| \leq C
\eea
where we let again $m=0$ in \eqref{s3eq23} and use the definitions of $A_i^{n+m}(x)$ and $A_i^{-(n+m)}(f^{n+m}(x))$. We thus get, by also using $P_i(x)=P_i(x)Q_i(x)$,
\bea\label{s3eq19}
& &\|A_i^{n+m}(x)P_i(x)A_i^{-n}(f^n(x))\| \nonumber\\[3mm]
& = & \|A_i^{n+m}(x)P_i(x)Q_i(x)A_i^{-n}(f^n(x))\|\nonumber\\[3mm]
& \leq & \|A_i^{n+m}(x)P_i(x)A_i^{-(n+m)}(f^{n+m}(x))\| \|A_i^{n+m}(x) Q_i(x)A_i^{-n}(f^n(x))\|\nonumber\\[3mm]
& \leq & 2C\cdot Ce^{-m\alpha} = 2C^2 e^{-m\alpha}.
\eea

Likewise, we can also get
\be\label{s3eq20}
\|A_i^{n}(x)P_{i+1}(x)A_{i}^{-(n+m)}(f^{n+m}(x))\| \leq 2C^2 e^{-m\alpha}
\ee

Making use of \eqref{s3eq19} and \eqref{s3eq20}, we readily have
\bea\label{s3eq21}
& & \|Df^{n+m}(x)P_i(x)Df^{-n}(f^n(x))\|\|Df^{n}(x) P_{i+1}(x) Df^{-(n+m)}(f^{n+m}(x))\| \nonumber\\[3mm]
& = & \|A_i^{n+m}(x)P_i(x)A_i^{-n}(f^n(x))\| \|A_{i}^{n}(x) P_{i+1}(x)A_{i}^{-(n+m)}(f^{n+m}(x))\|\nonumber\\[3mm]
&\leq & 4C^4e^{-2m\alpha},\nonumber
\eea
which implies, by Theorem \ref{mainthm1}, that the splitting $T_{x}M=E_1(x)\oplus\cdots\oplus E_k(x)$, $x\in\Lambda$ of tangent bundle over $\Lambda$, where $E_i(x)=\text{range}(P_i(x))$, is $(n_1, n_2, \cdots, n_k)$-dominated. This completes the proof of the theorem.
\end{proof}

\section{Discussion and Question}
Let $\Lambda$ be a subset of $M$. We denote by $\text{Diff}^1_T(\Lambda)$ the space of $C^1$ diffeomorphisms on $M$ which make $\Lambda$ to be a transitive set. Moreover, we denote $\mathcal{DS}$ the set of systems admitting dominated splitting over $\Lambda$ in $\text{Diff}^1_T(\Lambda)$. Let $\mathcal{RS}$ denote the reducible systems in $\text{Diff}^1_T(\Lambda)$. Then it follows from roughness of dominated splitting systems and the fact that dominated splitting implies reducible that $\mathcal{DS}\subset\text{int}(\mathcal{RS})$.

\begin{description}
\item[Qeustion]
Does $\text{int}(\mathcal{RS})=\mathcal{DS}$?
\end{description}

Generally, if there is no constriction for diffeomorphisms over $\Lambda$, the answer may be negative. To explain this, let take the most simplest case $\Lambda=\{x_1,x_2\}$ and two subbundles' case for example. Then one can find a diffeomorphism $f$ such that $x_1$ and $x_2$ are two of fixed hyperbolic points of $f$ with respect to the decomposition $T_{x_1}M=E_1\oplus F_1$ and $T_{x_2}M=E_2\oplus F_2$ respectively. Moreover, we can require $\dim(E_1)=\dim(F_2)\neq\dim(E_2)$. Then we observe that the functions in the neighborhood of $f$ in $\text{Diff}^1(\Lambda)$ are all satisfied these conditions, which means $f\in\text{int}\mathcal{RS}$. However, it is almost obvious that $f\notin\mathcal{DS}$.\\[3mm]

\textbf{Acknowledgements} The authors would like to thank Gang Liao, Enriuqe Ramiro Pujals and Sylvain Crovisier for discussions and useful comments.



\end{document}